\documentclass{elsarticle}

\usepackage[english]{babel}
\usepackage{dsfont} 
\usepackage{graphicx}
\usepackage{color}
\usepackage[hypertexnames=false]{hyperref} 
\usepackage{enumerate}
\usepackage{amssymb,empheq} 
\usepackage{amsthm} 
\usepackage{thmtools} 
\usepackage{cleveref}
\usepackage[dvipsnames]{xcolor}
\usepackage{subcaption}

\allowdisplaybreaks
\newtheorem{thm}{Theorem}[section]
\newtheorem{lem}[thm]{Lemma}

\theoremstyle{definition}
\newtheorem{definition}[thm]{Definition}
\newtheorem{example}[thm]{Example}

\newenvironment{dfn}{\begin{definition}}{\end{definition}}

\newtheorem{exm}[thm]{Example}

\theoremstyle{remark}
\newtheorem{rem}[thm]{Remark}

\newcommand{\exmsymbol}{\hfill$\circ$}

\newcommand{\cset}{\mathds{C}}
\newcommand{\kset}{\mathds{K}}
\newcommand{\nset}{\mathds{N}}

\newcommand{\rset}{\mathds{R}}

\newcommand{\diff}{\mathrm{d}}

\newcommand{\pos}{\mathrm{Pos}}

\newcommand{\tr}{\mathrm{tr}}

\newcommand{\Herm}{\mathrm{Herm}}
\newcommand{\Bor}{\mathrm{Bor}}
\newcommand{\Pos}{\pos}
\newcommand{\skl}[2]{\left\langle #1, #2 \right\rangle}

\newcommand{\cA}{\mathcal{A}}

\newcommand{\cH}{\mathcal{H}}

\newcommand{\cK}{\mathcal{K}}
\newcommand{\cL}{\mathcal{L}}

\newcommand{\cT}{\mathcal{T}}
\newcommand{\cX}{\mathcal{X}}

\author{Lars-Luca Langer\footnote{Email: lars-luca.langer@uni-konstanz.de}$^{\text{,a}}$}
\address{$^\text{a}$Department of Mathematics and Statistics, University of Konstanz, Universit\"atsstra{\ss}e 10, D-78464 Konstanz, Germany}

\journal{arXiv}

\title{Operator $K$-Positivity Preserver}

\begin{document}

\begin{abstract}
	We characterize positivity preserving maps $T: B(\cH)_h \otimes \rset[x_1, \dots, x_n] \to B(\cH)_h \otimes \rset[x_1, \dots, x_n]$ on $\rset^n$ and on compact sets $K \subseteq \rset^n$. 
	This also characterizes local operator moment sequences and general operator moment sequences via positivity preserving maps.
\end{abstract}

\begin{keyword}
linear operators\sep positivity preserver\sep moments
\MSC[2020] Primary 44A60; Secondary 47A57, 47B38.
\end{keyword}

\maketitle

\section{Introduction}
\label{sec:intro}

Let $n \in \nset$, $K \subseteq \rset^n$ be a closed set, and denote by $\rset[x_1, \dots, x_n]$ the set of polynomials in $n$ variables with real coefficients.
Denote by $\Pos_\rset(K) := \{p \in \rset[x_1, \dots, x_n] \mid p(x) \geq 0 \text{ for }x \in K\}$ the set of positive polynomials on $K$.
A real sequence $(s_\alpha)_{\alpha \in \nset_0^n}$ is called a $K$-moment sequence, if there exists a measure $\mu: \Bor(K) \to \rset$ supported on $K$, such that 
\[
	s_\alpha = \int_K t^\alpha ~\diff\mu(t)
\]
for all $\alpha \in \nset_0^n$.
Every linear map $T: \rset[x_1, \dots, x_n] \to \rset[x_1, \dots, x_n]$ has the \emph{canonical representation} 
\[
	T = \sum_{\alpha \in \nset_0^n} \frac{1}{\alpha!} \cdot q_\alpha \cdot \partial^\alpha
\]
with polynomials $q_\alpha \in \rset[x_1, \dots, x_n]$ for $\alpha \in \nset_0^n$.
Such a linear map $T$ is called a $K$-positivity preserver, if $T\Pos_\rset(K) \subseteq \Pos_\rset(K)$.
Borcea's Theorem \cite[Thm.\ 3.1]{borcea11} asserts that $T$ is a $\rset^n$-positivity preserver, if and only if, $(q_\alpha(y))_{\alpha \in \nset_0^n}$ is a $\rset^n$-moment sequence for all $y \in \rset^n$.
In \cite[Thm.\ 3.5]{didio25KPosPresGen}, di Dio and Schmüdgen generalized this to any closed $K\subseteq\rset^n$, i.e., $T$ is a $K$-positivity preserver if and only if $(q_\alpha(y))_{\alpha\in\nset_0^n}$ is a $(K-y)$-moment sequence for all $y\in K$.
In \cite{didiolang26_matrixpospres} this connection between $K$-positivity preserving maps and general $K$-moment sequences was extended to matrix polynomials $p \in \Herm_m \otimes \rset[x_1, \dots, x_n]$ for $m \in \nset$.
In the present paper we extend these results further to operator polynomials $p \in B(\cH)_h \otimes \rset[x_1, \dots, x_n]$ for an arbitrary Hilbert space $\cH$.
We use operator-valued measures $\mu: \Bor(\rset^n) \to B(\cH)_h$ which are used in quantum physics for describing states of observables, see e.g.\ \cite{neumark40}, \cite{brandt99}, \cite{dubin14}, \cite{moretti17}. 
In quantum physics, the operators $\mu(S) \in B(\cH)_h$ for Borel sets $S \in \Bor(\rset^n)$ encode probability distributions of measurement outcomes. 
Therefore, characterizing operator moment sequences is of interest and has been done in \cite{curto25} for the univariate moment problem to a large degree. 
A Borcea type theorem for characterizing operator moment sequences and positivity preserving maps, however, has not yet been given and will be investigated in the current work. 
Additionally, contrary to \cite{curto25}, we will work in the general multivariate case.

This paper is structured in the following way. 
After the preliminaries, in \Cref{sec:BorceaCharacterizationOnK}, we give our main result (\Cref{thm:BorceaKClosedPolynomialOperators}) which characterizes positivity preservers on closed sets $K \subseteq \rset^n$. 
Afterwards we discuss the shortcomings of this theorem which stem from the difference of local and general operator moment problems.
In \Cref{sec:BorceaCharacterizationOnCompactSets} we remedy these problems by restricting ourself to compact sets $K \subseteq \rset^n$.

\section{Preliminaries}
\label{sec:prelim}

\subsection{Operator Polynomials}

Let $\cK \in \{\rset, \cset\}$ and let $\cH$ be a $\kset$-Hilbert space with inner product  $\skl{\,\cdot\,}{\,\cdot\,}_\cH$ and induced norm $||\,\cdot\,||_\cH$.
Let $B(\cH)_h$ be the Banach space of all linear, bounded, and self-adjoint operators on $\cH$.
Let $B(\cH)_{h, +}$ be the set of all positive operators in $B(\cH)_h$.	
Then $B(\cH)_h$ is a real vector space and $B(\cH)_{h, +} \subseteq B(\cH)_h$ is a full-dimensional convex cone with non-empty interior.
Let $n \in \nset$ and let $B(\cH)_h \otimes \rset[x_1, \dots, x_n]$ be the set of all \emph{operator polynomials}, i.e., linear maps $p: \rset^n \to B(\cH)_h$ of the form 
\[
	p = \sum_{\alpha \preceq \beta} p_\alpha x^\alpha
\]
for some $\beta \in \nset_0^n$, $p_\alpha \in B(\cH)_h$, see e.g.\ \cite{rodman89}.
An operator polynomial $p \in B(\cH)_h \otimes \rset[x_1, \dots, x_n]$ is said to be positive on $K \subseteq \rset^n$, i.e., $p \succeq 0$ on $K$, if $p(x) \in B(\cH)_{h, +}$ for all $x \in K$.
We define
\[\Pos(K) := \big\{ p \in B(\cH)_h \otimes \rset[x_1, \dots, x_n] \;\big|\; p(x) \succeq 0\ \text{ for all } x \in K \big\}.\]

\subsection{Operator-Valued Measures}

\begin{dfn}[see \protect{\cite[Def.\ 1 and Thm.\ 1]{berb09}}]
\label{dfn:OperatorValuedMeasure}
Let $\cH$ be a Hilbert space and $(\cX,\cA)$ be a measurable space. 
We say $\mu: \cA \to B(\cH)_h$ is a \emph{positive operator-valued measure} on $(\cX,\cA)$ if
\[\mu_x := \skl{\mu(\,\cdot\,)x}{x}_\cH: \cA \to \rset\]
is a finite positive measure on $(\cX, \cA)$ for all $x \in \cH$.
\end{dfn}

The definition of positive operator-valued measures coincides with the definitions in \cite[Rem.\ 2]{cimpzalar13}.
Note that only \emph{positive} operator-valued measures are defined which are always countably additive in the strong operator topology, see e.g.\ \cite[Prop.\ 1]{berb09}.
If one wishes to work with signed operator-valued measures, then one needs to pay attention to the convergence of countable additivity. 

For integration with respect to operator-valued measures, see \cite[Sec.\ 5]{berb09} or \cite[Rem.\ 4]{cimpzalar13}.
For every operator-valued measure $\mu$ on a measurable space $\cX$ and every integrable function $f$, one finds a linear bounded self-adjoint operator denoted by 
\[
	\int_\cX f(t) ~\diff\mu(t) \in B(\cH)_h
\]
such that the usual integral properties are satisfied, i.e., linearity in $f$ and positive functions leading to positive integrals, see \cite[Thm.\ 10]{berb09}.

In the matrix case, integration was defined using an inner product on $B(\cH)_h$ with the trace of matrices, see \cite{didiolang26_matrixpospres}. 
For infinite-dimensional Hilbert spaces, $B(\cH)_h$ is not a Hilbert space anymore and a trace does not exist for every operator.

\subsection{Operator Moment Sequences}

There are two different types of moment sequences for operators, general moment sequences and local moment sequences. 
For an overview about these two types in the univariate case see \cite{curto25}. 
We will work with the multivariate case instead.
\begin{definition}
Let $\cH$ be a Hilbert space, $n \in \nset$, $K \subseteq \rset^n$ be closed, and let $\cT = (T_k)_{k \in \nset_0} \subseteq B(\cH)_h$ be a sequence of operators. 
$\cT$ is called an \emph{operator $K$-moment sequence}, if there exists a positive operator-valued measure $\mu: \Bor(K) \to B(\cH)_h$ such that 
\[
	T_k = \int_K t^k ~\diff\mu(t)
\]
for all $k \in \nset_0$.
\end{definition}
\begin{definition}
Let $\cH$ be a Hilbert space, $n \in \nset$, $K \subseteq \rset^n$ be closed, and let $\cT = (T_k)_{k \in \nset_0} \subseteq B(\cH)_h$ be a sequence of operators. 
$\cT$ is called a \emph{local operator $K$-moment sequence}, if for every $x \in \cH$, $(\skl{T_kx}{x}_\cH)_{k \in \nset} \subseteq \rset$ is a $K$-moment sequence. 
That is, $\cT$ is a local operator $K$-moment sequence, if for every $x \in \cH$, there exists a measure $\mu_x: \Bor(K) \to \rset$ such that
\[
	\skl{T_kx}{x}_\cH = \int_K t^k ~\diff\mu_x(t)
\]
for all $k \in \nset_0$.
\end{definition}
On compact sets $K \subseteq \rset^n$, both definitions coincide, i.e., every local operator $K$-moment sequence is also an operator $K$-moment sequence, see \cite[Thm.\ 2]{bisgaard94} or \cite[Thm.\ 3.3]{curto25} and the discussion thereafter.

\subsection{Map-Valued Measures}

We will also use another type of measure called map-valued measures. 
\begin{definition}[see e.g.\ \protect{\cite[Sec.\ 2]{cimpzalar13}}]
Let $\cH, \cK$ be Hilbert spaces and $(\cX, \cA)$ be a measurable space. 
We say $\mu: \cA \to \cL(B(\cH)_h, B(\cK)_h)$ is a \emph{map-valued measure} on $(\cX, \cA)$, if 
\[
	\mu[A] := \mu(\,\cdot\,)(A): \cA \to B(\cK)_h
\]
	is an operator-valued measure for all $A \in B(\cH)_h$. 
	A map-valued measure is called \emph{positive} if $\mu[A]$ is a positive operator-valued measure for all $A \in B(\cH)_{h, +}$.
\end{definition}
Both operator- and map-valued measures were defined and used in \cite{cimpzalar13} but with different terminology. 
Both types of measures were denoted as operator-valued measures in \cite{cimpzalar13} and a positive map-valued measure was called a non-negative operator-valued measure. 
For the purpose of clarity, we will distinguish between these two types of measures as \emph{operator}-valued and \emph{map}-valued measures.
Integration with respect to map-valued measures is defined the same as in \cite[Sec.\ 2]{cimpzalar13}. 
Let $p \in B(\cH)_h \otimes \rset[x_1, \dots, x_n]$.
Then there exist $m \in \nset$, $A_1, \dots, A_m \in B(\cH)_h$, and $p_1, \dots, p_m \in \rset[x_1, \dots, x_n]$ with 
\[
	p = A_1 p_1 + \dots + A_m p_m.
\]
For a map-valued measure $\mu: \Bor(K) \to \cL(B(\cH)_h, B(\cK)_h)$ and $K \subseteq \rset^n$ closed, 
\[
	\int_K p ~\diff\mu := \int_K p_1 ~\diff\mu[A_1] + \dots + \int_K p_m ~\diff\mu[A_m].
\]
This integral does not depend on the representation $A_1p_1 + \dots + A_m p_m$. 
Additionally for a positive map-valued measure $\mu: \Bor(K) \to \cL(B(\cH)_h, B(\cK)_h)$ with $K \subseteq \rset^n$ closed, and a positive operator polynomial $p \in B(\cH)_h \otimes \rset[x_1, \dots, x_n]$, 
\[
	\int_K p ~\diff\mu \succeq 0.
\]

\subsection{Haviland Theorems}

\begin{thm}[see \protect{\cite[Thm.\ 3]{cimpzalar13}}]
\label{thm:HavilandTheoremOperators}
Let $n \in \nset$, $K \subseteq \rset^n$ be closed, and $\cH$ be a Hilbert space.
Let
\[L: B(\cH)_h \otimes \rset[x_1, \dots, x_n] \to \rset\]
be a linear functional.
Then the following are equivalent:
\begin{enumerate}[(i)]
\item $L$ preserves positivity on $K$, i.e., $L(p) \geq 0$ for all $p \in \pos(K)$.

\item $L$ is a $K$-moment functional, i.e., there exists a positive Borel map-valued measure $\mu: \Bor(K) \to \cL(B(\cH)_h, \rset)$ on $K$ such that 
\[
	L(p) = \int_K p(x) ~\diff\mu(x)
\]
for all $p \in B(\cH)_h \otimes \rset[x_1, \dots, x_n]$.
\end{enumerate}
\end{thm}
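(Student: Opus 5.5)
The implication (ii)$\Rightarrow$(i) is immediate from the positivity of the integral with respect to a positive map-valued measure recorded above. If $p \in \pos(K)$, then $\int_K p\,\diff\mu \ge 0$, so $L(p)\ge 0$. The substance lies in (i)$\Rightarrow$(ii), and for that direction the plan is to reduce to the classical Haviland theorem, componentwise in the operator variable.

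\textbf{Step 1 (scalar slices).} For each fixed $A \in B(\cH)_{h,+}$ I consider the linear functional $L_A:\rset[x_1,\dots,x_n]\to\rset$ defined by $L_A(q) := L(Aq)$. If $q \in \Pos_\rset(K)$, then $Aq(x) = q(x)A \succeq 0$ for every $x\in K$, so $Aq\in\pos(K)$ and $L_A(q)\ge 0$. The classical Haviland theorem then yields a positive Borel measure $\nu_A$ on $K$ representing $L_A$.

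\textbf{Step 2 (assembling the map-valued measure).} I want to define $\mu(S)(A) := \nu_A(S)$ and extend it linearly to all of $B(\cH)_h$ by writing $A = A_+ - A_-$. The difficulty is that $\nu_A$ need not be unique (the moment problem may be indeterminate), so $A\mapsto\nu_A$ need not be additive. To get around this, I would avoid choosing measures slice by slice and instead apply a Haviland/Riesz argument to a larger space. Consider the real vector space $E$ of functions $K\to B(\cH)_h$ spanned by $B(\cH)_h\otimes\rset[x]$ together with the functions $A\,f$, where $A \in B(\cH)_h$ and $f\in C_c(K)$ (or $f$ a bounded continuous function dominated by polynomials). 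On $E$ I take the cone of pointwise positive functions. Polynomials of the form $I\cdot(1+|x|^2)^k$ are cofinal for the elements $A\,x^\alpha$, since $\|A\|\,I\,(1+|x|^2)^k \pm A x^\alpha \succeq 0$ for suitable $k$. By the M.~Riesz extension theorem, $L$ then extends to a positive functional $\tilde L$ on $E$. For each $A$, the map $f\mapsto\tilde L(Af)$ is linear in $A$ and positive in $f$ when $A\succeq0$. The Riesz representation theorem on $C_c(K)$ gives measures $\mu[A]$ depending linearly on $A$, because they represent a functional that is already linear in $A$, and uniqueness in Riesz makes the assignment consistent. Finally, a Haviland-type limiting argument, approximating polynomials by compactly supported functions using the cofinal element, gives $L(Ap)=\int_K p\,\diff\mu[A]$.

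\textbf{Step 3 (the main obstacle).} The hardest part is making $\mu$ a genuine map-valued measure in the sense defined above. Each $\mu[A]$ must be a (signed) real measure, with finite total variation controlled by $\|A\|$, and $\mu[A]$ must be positive for $A \succeq 0$. Positivity is built into the construction. For boundedness, I would use $-\|A\|I\preceq A\preceq\|A\|I$, which gives $|\mu[A](S)|\le \|A\|\,\mu[I](S)$, with $\mu[I]$ a finite measure because its total mass is $L(I)$. Countable additivity of $\mu[A]$ then follows from that of $\mu[A_\pm]$ after writing $A=\|A\|I-(\|A\|I-A)$.

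For the extension step I would follow the standard proof of Haviland's theorem, using the cofinal element $I(1+|x|^2)$. This is exactly the argument of \cite[Thm.\ 3]{cimpzalar13}, and one may alternatively simply cite it.
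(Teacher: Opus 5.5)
Your proposal is correct. The paper gives no proof of this statement; it imports it from \cite[Thm.~3]{cimpzalar13}. So your argument is a self-contained reconstruction rather than an alternative to something in the paper, and it follows the standard Haviland scheme.

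What your route makes clear is why the non-uniqueness you flag in Step~1 is harmless. You fix a single positive extension $\tilde L$ on $E$ before representing anything, so the functionals $f\mapsto\tilde L(Af)$ on $C_c(K)$ are jointly linear in $A$. Uniqueness in the Riesz representation theorem then gives additivity of $A\mapsto\mu[A]$ on $B(\cH)_{h,+}$. This yields a linear extension to $B(\cH)_h$ with $|\mu[A](S)|\le\|A\|\,\mu[I](S)$.

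A few points should be stated more carefully when you write it out.
\begin{enumerate}[(1)]
\item For the M.~Riesz step, the domination you actually need is for the new elements $Af$, not for $Ax^\alpha$, which already lie in the domain. This is trivial: $\|A\|\,\|f\|_\infty I\mp Af\succeq 0$ on $K$.
\item The weight $I(1+|x|^2)^k$ is what drives the cutoff step. Take $A\succeq 0$, $p\in\Pos_\rset(K)$, and $k$ with $p(x)/(1+|x|^2)^k\to 0$ as $|x|\to\infty$. For cutoffs $0\le f\le 1$ with $f=1$ on a large ball, one gets $0\preceq Ap(1-f)\preceq\varepsilon\|A\|(1+|x|^2)^kI$ on $K$. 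Together with monotone convergence, this gives $L(Ap)=\int_K p\,\diff\mu[A]$.
\item The domain of $L$ consists of formal polynomials, while $E$ consists of functions on $K$. You should note that (i) forces $L(p)=0$ whenever $p|_K=0$, so $L$ is well defined on restrictions.
\item The Riesz step a priori gives only $\mu[I](K)\le L(I)$. Equality follows from the cutoff argument with $p=1$, but finiteness is all you use.
\end{enumerate}
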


\begin{thm}[see \protect{\cite[Thm.\ 4 and 5]{cimpzalar13}}]
\label{thm:PositiveMapCompactSetExistsPositiveMeasure}
Let $n \in \nset$, $K \subseteq \rset^n$ be compact, and $\cH$ be a Hilbert space.
Let $L: B(\cH)_h \otimes \rset[x_1, \dots, x_n] \to B(\cH)_h$ be linear.
Then the following are equivalent:
\begin{enumerate}[(i)]

\item $L$ preserves positivity on $K$, i.e., $L(p) \succeq 0$ for all $p \in \pos(K)$.
\item There exists a unique positive Borel map-valued measure
\[\mu: \Bor(K) \to \cL(B(\cH)_h, B(\cH)_h)\]
on $K$ such that 
\[L(p) = \int_K p(x) ~\diff\mu(x)\]
for all $p \in B(\cH)_h \otimes \rset[x_1, \dots, x_n]$.
\end{enumerate}
\end{thm}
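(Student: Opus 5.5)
The direction (ii)$\Rightarrow$(i) is immediate. If $\mu$ is a positive map-valued measure and $p\in\pos(K)$, the integral $\int_K p\,\diff\mu$ is positive, as recorded at the end of the subsection on map-valued measures. The work is in (i)$\Rightarrow$(ii). My plan is to reduce to the scalar case via \Cref{thm:HavilandTheoremOperators}, and then to reassemble the operator-valued object from its quadratic forms. Uniqueness will be the tool that makes the reassembly consistent.

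\emph{Step 1 (scalar measures).} Fix $x\in\cH$ and put $L_x(p):=\skl{L(p)x}{x}_\cH$. By (i), $L_x$ is a linear functional with $L_x(p)\ge 0$ for all $p\in\pos(K)$. \Cref{thm:HavilandTheoremOperators} then gives a positive map-valued measure $\nu_x:\Bor(K)\to\cL(B(\cH)_h,\rset)$ with $L_x(p)=\int_K p\,\diff\nu_x$.

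\emph{Step 2 (uniqueness).} I expect this to be the main point, and it is where compactness of $K$ enters. For $A\in B(\cH)_h$ write $A=\|A\|I-(\|A\|I-A)$. Then $\nu_x[A]$ is a difference of two finite positive measures, hence a finite signed Borel measure on $K$. Its integrals against all real polynomials are prescribed, namely $\int_K q\,\diff\nu_x[A]=L_x(Aq)$. Polynomials are dense in $C(K)$ by Stone--Weierstrass, and a finite signed Borel measure on a compact metric space is determined by its action on $C(K)$ (Riesz). Hence $\nu_x[A]$ is uniquely determined by $L$, $x$ and $A$.

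\emph{Step 3 (forms, boundedness, operators).} Fix $S\in\Bor(K)$ and $A\in B(\cH)_h$, and set $Q_{S,A}(x):=\nu_x(S)(A)$.
\begin{itemize}
\item \emph{Parallelogram law.} The functionals satisfy $L_{x+y}+L_{x-y}=2L_x+2L_y$, and $L_{\lambda x}=|\lambda|^2L_x$. The measure $\nu_{x+y}+\nu_{x-y}-2\nu_y$ represents $2L_x$, so by the uniqueness of Step 2 the corresponding identities hold for $\nu$. Thus $Q_{S,A}$ satisfies the parallelogram law and is homogeneous of degree two.
\item \emph{Sesquilinear form.} By polarization, $Q_{S,A}$ comes from a hermitian (complex case) or symmetric bilinear (real case) form $b_{S,A}$.
\item \emph{Boundedness.} From $-\|A\|I\preceq A\preceq \|A\|I$ and positivity of $\nu_x$ we get
\[
|Q_{S,A}(x)|\le \|A\|\,\nu_x(K)(I)=\|A\|\skl{L(I)x}{x}_\cH\le \|A\|\,\|L(I)\|\,\|x\|^2 .
\]
\item \emph{Definition of $\mu$.} By the Riesz representation of bounded forms there is a unique $\mu(S)(A)\in B(\cH)_h$ with $\skl{\mu(S)(A)x}{x}_\cH=\nu_x(S)(A)$.
\end{itemize}
Linearity of $A\mapsto\mu(S)(A)$ follows from linearity of $\nu_x(S)$. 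For each $x$, the set function $S\mapsto\skl{\mu(S)(A)x}{x}_\cH=\nu_x[A](S)$ is a measure. Hence $\mu[A]$ is an operator-valued measure in the sense of \Cref{dfn:OperatorValuedMeasure}, and it is positive for $A\succeq 0$. This makes $\mu$ a positive map-valued measure.

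\emph{Step 4 (representation and uniqueness).} Testing against every $x$ and polarizing gives $L(p)=\int_K p\,\diff\mu$. Suppose $\mu'$ is another positive map-valued measure representing $L$. Then $\skl{\mu'(\cdot)(\cdot)x}{x}_\cH$ represents $L_x$, so by Step 2 it equals $\nu_x$ for every $x$. Polarization then forces $\mu'=\mu$.
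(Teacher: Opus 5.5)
Your proof is correct, but it takes a different route from the paper. The paper gives no argument of its own: it cites Thms.~4 and~5 of Cimpri\v{c}--Zalar and only asserts that the extra hypotheses imposed there are unnecessary when $K$ is compact. You instead derive the statement from the scalar-valued \Cref{thm:HavilandTheoremOperators}, applied to $p\mapsto\skl{L(p)x}{x}_\cH$ for each $x\in\cH$. Determinacy on compact $K$ (Stone--Weierstrass plus uniqueness in the Riesz representation) then forces the family $(\nu_x)_{x\in\cH}$ to depend quadratically on $x$, so the operators $\mu(S)(A)$ can be read off from bounded forms. The paper only gestures at this mechanism after the Bisgaard example (``bounded sesquilinear form'', ``determinate on compact sets''), so your argument actually justifies its remark that the additional restrictions can be dropped. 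It also shows that compactness is used only in Step~2; the bound $|\nu_x(S)(A)|\le\|A\|\skl{L(I)x}{x}_\cH$ holds regardless. This explains why the statement breaks down for non-compact closed $K$, and it proves the uniqueness clause in (ii) directly rather than inheriting it. The citation is shorter, but it leaves exactly this reduction unproved.

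One step needs tightening. A function satisfying the parallelogram law and $Q(tx)=t^2Q(x)$ in general only polarizes to a biadditive form. In the real case there are derivation-type examples where that form is not bilinear, so ``by polarization'' is not enough as written. Your boundedness estimate does not depend on this step, so use it first. It gives $|b_{S,A}(x,y)|\le\|A\|\,\|L(I)\|\,(\|x\|^2+\|y\|^2)$, and an additive function of $t\in\rset$ that is bounded on $[0,1]$ is linear. Alternatively, apply Step~2 directly to the polarized measures $\tfrac14(\nu_{x+y}-\nu_{x-y})$, or the four-term complex analogue. Their polynomial moments are $\mathrm{Re}\,\skl{L(Aq)x}{y}_\cH$, respectively $\skl{L(Aq)x}{y}_\cH$, and hence linear in $x$, so uniqueness gives linearity at once.
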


In \cite{cimpzalar13}, \Cref{thm:PositiveMapCompactSetExistsPositiveMeasure} was given with additional restrictions on the functional $L$. 
These are not needed when $K \subseteq \rset^n$ is compact.

\subsection{Canonical Representation of Linear Operators}

\begin{lem}[see \protect{\cite[Lem.\ 3.1]{didiolang26_matrixpospres}}]
\label{lem:DiffSumRepresentationRLinearOp}
Let $n \in \nset$ and let $V$ be a real vector space. 
Let
\[T: V \otimes \rset[x_1, \dots, x_n] \to V \otimes \rset[x_1, \dots, x_n]\]
be a linear operator, i.e., $T(af+g) = a\cdot Tf + Tg$ for all $a\in\rset$ and $f,g\in V \otimes \rset [x_1, \dots, x_n]$. 
Then, for all $\alpha \in \nset_0^n$, there exist unique linear operators
\[Q_\alpha: V \to V \otimes \rset[x_1, \dots, x_n]\]
such that
\begin{equation}\label{eq:DiffSumRepresentationQalpha}
T = \sum_{\alpha \in \nset_0^n} \frac{1}{\alpha!}\cdot Q_\alpha \times \partial^\alpha
\quad\text{with}\quad
(Q_\alpha \times \partial^\alpha)(v \otimes x^\beta) := Q_\alpha(v)\cdot\partial^\alpha x^\beta
\end{equation}
for all $\alpha, \beta \in \nset_0^n$ and $v \in V$.
\end{lem}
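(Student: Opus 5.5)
The plan is to construct the operators $Q_\alpha$ recursively from the action of $T$ on monomials, using the standard triangular structure of the differential operators $\partial^\alpha$ on monomials. For $v \in V$ and $\beta \in \nset_0^n$, note that $\partial^\alpha x^\beta = \frac{\beta!}{(\beta-\alpha)!} x^{\beta-\alpha}$ if $\alpha \preceq \beta$ and $0$ otherwise. Hence for fixed $v\otimes x^\beta$ the sum in \eqref{eq:DiffSumRepresentationQalpha} is finite, so the right-hand side is a well-defined linear operator for any choice of $Q_\alpha$. Writing the required identity on $v \otimes x^\beta$ gives
\[
T(v\otimes x^\beta) = \sum_{\alpha \preceq \beta} \binom{\beta}{\alpha} Q_\alpha(v)\, x^{\beta-\alpha}.
\]
The term with $\alpha=\beta$ is exactly $Q_\beta(v)$. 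This forces the recursive definition
\[
Q_\beta(v) := T(v\otimes x^\beta) - \sum_{\alpha \prec \beta,\ \alpha\neq\beta} \binom{\beta}{\alpha} Q_\alpha(v)\, x^{\beta-\alpha},
\]
by induction on $|\beta|$, starting from $Q_0(v) := T(v\otimes 1)$.

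The key steps, in order, are as follows.

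\begin{enumerate}[(i)]
\item Define $Q_\beta$ by the recursion above, inducting over $|\beta|$ (equivalently over the partial order $\preceq$, which is well-founded on $\nset_0^n$).
\item Check that each $Q_\beta: V \to V\otimes\rset[x_1,\dots,x_n]$ is linear. This follows inductively, since $v\mapsto T(v\otimes x^\beta)$ is linear and multiplication by $x^{\beta-\alpha}$ is linear.
\item Verify the identity on the spanning set $\{v\otimes x^\beta\}$ of $V\otimes\rset[x_1,\dots,x_n]$. The recursion is literally this identity.
\item Extend to all of $V\otimes\rset[x_1,\dots,x_n]$ by linearity of both sides. The right side is linear because each $Q_\alpha\times\partial^\alpha$ is well defined: the expression $Q_\alpha(v)\partial^\alpha p$ is bilinear in $(v,p)$, so it factors through the tensor product by its universal property.
\item Prove uniqueness. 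If two families $Q_\alpha, Q'_\alpha$ both represent $T$, then their differences $D_\alpha$ satisfy $\sum_{\alpha\preceq\beta}\binom{\beta}{\alpha}D_\alpha(v)x^{\beta-\alpha}=0$ for all $\beta$ and $v$. Induction on $|\beta|$ then gives $D_\beta(v)=0$, since all lower terms vanish by the induction hypothesis.
\end{enumerate}

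The only mildly delicate point is step (iv): making sure $Q_\alpha\times\partial^\alpha$ is well defined on the algebraic tensor product, and that the infinite sum over $\alpha$ makes sense. The first I would handle through the universal property as described. The second holds because on each elementary tensor, and hence on each finite sum of them, only finitely many terms are nonzero.
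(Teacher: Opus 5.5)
Your proof is correct and follows the route the paper relies on: the lemma is quoted from di Dio and Langer, and \Cref{rem:QalphaFormula} says the $Q_\alpha$ are built recursively from $T(v\otimes x^\beta)$, as in your triangular recursion, with uniqueness coming from the same triangularity. Solving your recursion by binomial inversion gives the explicit formula $Q_\beta(v)=\sum_{\alpha\preceq\beta}\binom{\beta}{\alpha}(-1)^{\beta-\alpha}T(v\otimes x^\alpha)\,x^{\beta-\alpha}$ recorded in that remark.
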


The representation (\ref{eq:DiffSumRepresentationQalpha}) is called \emph{canonical representation}.

\begin{rem}[see \protect{\cite[Rem.\ 3.2]{didiolang26_matrixpospres}}]
\label{rem:QalphaFormula}
The $Q_\alpha$ for $\alpha\in\nset_0^n$ from \Cref{lem:DiffSumRepresentationRLinearOp} are constructed recursively. 
Using the binomial transform, we get the explicit form
\[
Q_\beta(v) = \sum_{\alpha \preceq \beta} \binom{\beta}{\alpha}\cdot (-1)^{\beta-\alpha}\cdot T(v \otimes x^\alpha)\cdot x^{\beta-\alpha}
\]
for all $\beta \in \nset_0^n$ and $v \in V$.
\exmsymbol
\end{rem}

\begin{example}
	Let $n = 1$, $\cH = L^2([1, 3])$, and $y \in \rset$. 
	Define the operator  
\[
	T: B(\cH)_h \otimes \rset[x] \to B(\cH)_h \otimes \rset[x] 
	\quad \text{ by }\quad
	T(A \otimes x^k) = \widetilde{T}(A) \otimes (x+y)^k
\]
	for $A \in B(\cH)_h$, $k \in \nset$, and $x \in \rset$ with 
\[
	\widetilde{T}: B(\cH)_h \to B(\cH)_h, \quad
	\left[\widetilde{T}(A)f\right](t) = t \cdot Af(t)
\]
	for $A \in B(\cH)_h$, $f \in L^2([1, 3])$, $t \in [1, 3]$. 
	Define $Q_m: B(\cH)_h \to B(\cH)_h \otimes \rset[x]$,  
\begin{align*}
	Q_m(A)(x) 
	&= \sum_{k = 0}^m \binom{m}{k} (-1)^{m-k} \cdot T(A \otimes x^k) \cdot x^{m-k}
\\	&= \widetilde{T}(A) \otimes \left(\sum_{k = 0}^m \binom{m}{k} (-1)^{m-k} (x+y)^k x^{m-k}\right)
	= y^m \cdot \widetilde{T}(A)
\end{align*}
	for $m \in \nset$, $A \in B(\cH)_h$, and $x \in \rset$. 
	Then, by \Cref{rem:QalphaFormula}, 
\[
	T = \sum_{k = 0}^\infty \frac{1}{k!} Q_k \times \partial^k.
	\tag*{$\circ$}
\]
\end{example}

\section{Operator $K$-Positivity Preservers for closed $K \subseteq \rset^n$}
\label{sec:BorceaCharacterizationOnK}

Now we state the main theorem of this paper which is a generalization of \cite[Thm.\ 3.5]{didio25KPosPresGen} and \cite[Thm.\ 4.2]{didiolang26_matrixpospres}.
It characterizes $K$-positivity preservers of operator polynomials for closed $K \subseteq \rset^n$.

\begin{thm}
\label{thm:BorceaKClosedPolynomialOperators}
	Let $\cH$ be a Hilbert space, $n \in \nset$, $K \subseteq \rset^n$ be closed, and let $T: B(\cH)_h \otimes \rset[x_1, \dots, x_n] \to B(\cH)_h \otimes \rset[x_1, \dots, x_n]$ be a linear operator on operator polynomials with 
\begin{equation}\label{eq:DiffSumReprT}
	T = \sum_{\alpha \in \nset_0^n} \frac{1}{\alpha!}\cdot Q_\alpha \times \partial^\alpha
\end{equation}
	and linear operators $Q_\alpha: B(\cH)_h \to B(\cH)_h \otimes \rset[x_1, \dots, x_n]$, $\alpha \in \nset_0^n$.
	Then the following is equivalent:
\begin{enumerate}[(i)]
\item $T$ preserves positivity on $K$, i.e., $T\Pos(K) \subseteq \Pos(K)$
\item For every $y \in K, a \in \cH$, there exist positive Borel map-valued measures $\mu_{y, a}: \Bor(K) \to \cL(B(\cH)_h, \rset)$ such that 
\[
	\skl{\left[Tp(y)\right]a}{a}_\cH = \int_K p(t) ~\diff\mu_{y, a}(t)
\]
	for all $p \in B(\cH)_h \otimes \rset[x_1, \dots, x_n]$.
\item For every $y \in K, a \in \cH$, there exist positive map-valued measures $\nu_{y, a}: \Bor(K-y) \to \cL(B(\cH)_h, \rset)$ such that
\[
	\skl{Q_\alpha(A)(y)a}{a}_\cH = \int_{K-y} t^\alpha ~\diff\nu_{y, a}[A](t)
\]
	for all $\alpha \in \nset_0^n$, $A \in B(\cH)_h$.
\end{enumerate}
\end{thm}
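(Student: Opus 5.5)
We prove the cycle (i)$\Rightarrow$(ii)$\Rightarrow$(iii)$\Rightarrow$(i), following the pattern of \cite[Thm.\ 3.5]{didio25KPosPresGen} and \cite[Thm.\ 4.2]{didiolang26_matrixpospres}. The scalar functional to work with is, for fixed $y\in K$ and $a\in\cH$,
\[
L_{y,a}: B(\cH)_h\otimes\rset[x_1,\dots,x_n]\to\rset,\qquad L_{y,a}(p):=\skl{[Tp(y)]a}{a}_\cH .
\]

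\emph{(i)$\Rightarrow$(ii).} The functional $L_{y,a}$ is linear. If $p\in\Pos(K)$, then $Tp\in\Pos(K)$, so $Tp(y)\succeq0$ because $y\in K$, and hence $L_{y,a}(p)\geq0$. By \Cref{thm:HavilandTheoremOperators} there is a positive map-valued measure $\mu_{y,a}$ on $K$ representing $L_{y,a}$, which is exactly (ii).

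\emph{(ii)$\Rightarrow$(iii).} We compute $Q_\alpha$ with \Cref{rem:QalphaFormula}, evaluate at $y$, and pair with $a$:
\[
\skl{Q_\beta(A)(y)a}{a}_\cH=\sum_{\alpha\preceq\beta}\binom{\beta}{\alpha}(-1)^{\beta-\alpha}y^{\beta-\alpha}\,\skl{[T(A x^\alpha)](y)a}{a}_\cH
=\int_K (t-y)^\beta\,\diff\mu_{y,a}[A](t).
\]
The second equality uses (ii) applied to $p=Ax^\alpha$, together with linearity of the integral in the scalar polynomial, since $\sum_\alpha\binom{\beta}{\alpha}(-y)^{\beta-\alpha}t^\alpha=(t-y)^\beta$. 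We then take $\nu_{y,a}$ to be the push-forward of $\mu_{y,a}$ under the translation $t\mapsto t-y$, i.e.\ $\nu_{y,a}(S):=\mu_{y,a}(S+y)$ for $S\in\Bor(K-y)$. For each $A\in B(\cH)_h$, $\nu_{y,a}[A]$ is again a (scalar) operator-valued measure. It is positive for $A\succeq0$, because translation preserves positivity of the scalar measures. The change of variables formula for each scalar measure $\mu_{y,a}[A]$ then gives (iii).

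\emph{(iii)$\Rightarrow$(i).} Fix $p=\sum_j A_jp_j\in\Pos(K)$, $y\in K$ and $a\in\cH$. Writing $T$ in its canonical form and using (iii) for each $A_j$ gives
\[
\skl{[Tp](y)a}{a}_\cH=\sum_j\sum_\alpha\frac{1}{\alpha!}\,\partial^\alpha p_j(y)\int_{K-y}t^\alpha\,\diff\nu_{y,a}[A_j](t)=\int_{K-y}p(y+t)\,\diff\nu_{y,a}(t),
\]
by Taylor's formula $p_j(y+t)=\sum_\alpha\frac{1}{\alpha!}\partial^\alpha p_j(y)t^\alpha$. Both sums are finite because the $p_j$ are polynomials. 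The map $t\mapsto p(y+t)$ is a positive operator polynomial on $K-y$, so the integral is $\geq0$ by positivity of the map-valued integral. Since this holds for every $a$, we get $Tp(y)\succeq0$ for every $y\in K$, i.e.\ $Tp\in\Pos(K)$.

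The main obstacle is the bookkeeping with map-valued integrals. In step (ii)$\Rightarrow$(iii) we must check that translating $\mu_{y,a}$ yields a map-valued measure that is positive in the required sense, and that its integral of operator polynomials is independent of the chosen representation. In step (iii)$\Rightarrow$(i) we must check that the integrals of monomials against $\nu_{y,a}[A_j]$ combine into the integral of $p(y+\cdot)$ against $\nu_{y,a}$. Both points reduce to the definition of the integral through representations $\sum_j A_jp_j$, applied to one scalar measure $\mu_{y,a}[A_j]$ at a time, together with the linearity of $\mu(S)$ in $A$.
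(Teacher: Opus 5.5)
Your proof is correct and follows the paper's argument. You use Haviland's theorem for (i)$\Rightarrow$(ii), and the binomial formula for $Q_\beta$ plus translation of $\mu_{y,a}$ for (ii)$\Rightarrow$(iii). The only difference is organizational: you close a cycle with a direct (iii)$\Rightarrow$(i), via Taylor expansion and positivity of $\int_{K-y}p(y+t)\,\diff\nu_{y,a}(t)$, whereas the paper translates $\nu_{y,a}$ back to a measure on $K$ to get (iii)$\Rightarrow$(ii) and then uses positivity of the integral for (ii)$\Rightarrow$(i) --- the same computation.
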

\begin{proof}
	(i) $\Rightarrow$ (ii):
	Let $y \in K$ and $a \in \cH$. 
	Define the linear functional 
\[
	L_{y, a}: B(\cH)_h \otimes \rset[x_1, \dots, x_n] \to \rset, \quad
	p \mapsto \skl{\left[Tp(y)\right]a}{a}_\cH.
\]
	Then $L_{y, a}p \geq 0$ for all $p \in \Pos(K)$. 
	(ii) now follows from \Cref{thm:HavilandTheoremOperators}.

	(ii) $\Rightarrow$ (i):
	Let $y \in K$ and $a \in \cH$. 	
	By \Cref{thm:HavilandTheoremOperators}, 
\[
	L_{y, a}: B(\cH)_h \otimes \rset[x_1, \dots, x_n], \quad L_{y, a}p := \skl{\left[Tp(y)\right]a}{a}_\cH
\]
	is a positive functional, i.e., $L_{y, a}p \geq 0$ for all $p \in \Pos(K)$. 
	This holds for all $a \in \cH$, hence $Tp(y) \succeq 0$ for all $p \in \Pos(K)$, $y \in K$, i.e., $T$ preserves positivity on $K$.

	(ii) $\Rightarrow$ (iii):
	Let $A \in B(\cH)_h$, $y \in K$, and $a \in \cH$. 
	By \Cref{rem:QalphaFormula}, 
\[
	Q_\beta(A)(y) = \sum_{\alpha \preceq \beta} 
		\binom{\beta}{\alpha} \cdot (-1)^{\beta-\alpha} \cdot T(A \otimes t^\alpha)(y) \cdot y^{\beta-\alpha}
\]
	for $\beta \in \nset_0^n$. 
	Hence, 
\begin{align*}
	\skl{Q_\beta(A)(y)a}{a}_\cH 
	&= \sum_{\alpha \preceq \beta} 
		\binom{\beta}{\alpha} \cdot (-1)^{\beta-\alpha} \cdot y^{\beta-\alpha} \cdot \skl{T(A \otimes t^\alpha)(y)a}{a}_\cH
\\	&= \sum_{\alpha \preceq \beta} 
		\binom{\beta}{\alpha} \cdot (-1)^{\beta-\alpha} \cdot y^{\beta-\alpha} \cdot \int_K A \otimes t^\alpha ~\diff\mu_{y, a}(t)
\\	&= \int_K A \otimes (t-y)^\beta ~\diff\mu_{y, a}(t)
\\	&= \int_{K-y} A \otimes t^\beta ~\diff\nu_{y, a}(t)
	= \int_{K-y} t^\beta ~\diff\nu_{y, a}[A](t)
\end{align*}
	for all $\beta \in \nset_0^n$ where $\nu_{y, a} := \mu_{y, a}( \,\cdot\, + y): \Bor(K-y) \to \cL(B(\cH)_h, \rset)$ is a positive map-valued measure. 

	(iii) $\Rightarrow$ (ii):
	Let $A \in B(\cH)_h$, $y \in K$, and $a \in \cH$.
	By (\ref{eq:DiffSumReprT}) and (iii), 
\begin{align*}
	\skl{\left[T(A \otimes x^\beta)(y)\right]a}{a}_\cH
	&= \sum_{\alpha \preceq \beta} \binom{\beta}{\alpha} \skl{Q_\alpha(A)(y)a}{a}_\cH \cdot y^{\beta-\alpha}
\\	&= \sum_{\alpha \preceq \beta} \binom{\beta}{\alpha} y^{\beta-\alpha} \int_{K-y} t^\alpha ~\diff\nu_{y, a}[A](t)
\\	&= \int_{K-y} (t+y)^\beta ~\diff\nu_{y, a}[A](t)
	= \int_K A \otimes t^\beta ~\diff\mu_{y, a}(t)
\end{align*}
	where $\mu_{y, a} := \nu_{y, a}(\,\cdot\, - y): \Bor(K) \to \cL(B(\cH)_h, \rset)$ is a positive map-valued measure. 
	By linearity, 
\[
	\skl{\left[Tp(y)\right]a}{a}_\cH = \int_K p(t) ~\diff\mu_{y, a}(t)
\]
	for all $p \in B(\cH)_h \otimes \rset[x_1, \dots, x_n]$, $y \in K$, and $a \in \cH$.
\end{proof}

	Note that (iii) from \Cref{thm:BorceaKClosedPolynomialOperators} asserts that $\left(Q_\alpha(A)(y)\right)_{\alpha \in \nset_0^n}$ is a local operator $(K-y)$-moment sequence for all $y \in K$, $A \in B(\cH)_{h, +}$. 
	We cannot hope to find general operator moment sequences to represent $\left(Q_\alpha(A)(y)\right)_{\alpha \in \nset_0^n}$ because of the following example which is based on \cite[Thm.\ 1]{bisgaard94}, see also \cite[Exm.\ 2.8]{curto25}.
\begin{exm}
	Let $n = 1$ and $\cH = \cset^2$. 
	Then $B(\cH)_h$ is the set of Hermitian matrices $\Herm_2 \in \cset^{2 \times 2}$ which is a real Hilbert space with inner product $\skl{X}{Y}_{B(\cH)_h} := \tr(XY) \in \rset$ and orthonormal basis 
\begin{align*}
	H_{1, 1} &:= \begin{pmatrix}1&0\\0&0\end{pmatrix}, 
	&H_{1, 2} &:= \frac{1}{\sqrt{2}}\begin{pmatrix}0&1\\1&0\end{pmatrix}, 
\\	H_{2, 1} &:= \frac{1}{\sqrt{2}}\begin{pmatrix}0&i\\-i&0\end{pmatrix}, 
	&H_{2, 2} &:= \begin{pmatrix}0&0\\0&1\end{pmatrix}.
\end{align*}
	For $k \in \nset_0$, define $Q_k: \Herm_2 \to \Herm_2 \otimes \rset[x]$, 
\begin{gather*}
	Q_k(H_{1, 1}) = Q_k(H_{1, 2}) = Q_k(H_{2, 1}) = Q_k(H_{2, 2}), 
\\	Q_0(H_{1, 1}) := \begin{pmatrix}4 & 0\\0 & 1\end{pmatrix}, \quad
	Q_1(H_{1, 1}) := \begin{pmatrix}0 & 2\\2 & 0\end{pmatrix}, \quad
	Q_2(H_{1, 1}) := \begin{pmatrix}1 & 0\\0 & 4\end{pmatrix}, 
\\	Q_{2k-1}(H_{1, 1}) := 0 \text{ for } k \geq 2, \quad 
	Q_{2k}(H_{1, 1}) := \begin{pmatrix}a_k & 0\\0 & a_k\end{pmatrix} \text{ for } k \geq 2
\end{gather*}
	with $a_n = 2^{(n+2)!}$ for $n \geq 0$. 
	Then for all $M \in \Herm_2$ and $y \in \rset$, $(Q_k(M)(y))_{k \in \nset_0}$ is a local operator $\rset$-moment measure, see \cite[Thm.\ 1]{bisgaard94}. 
	Hence, by \Cref{thm:BorceaKClosedPolynomialOperators}, $T: \Herm_2 \otimes \rset[x] \to \Herm_2 \otimes \rset[x]$ with $T = \sum_{k = 0}^\infty \frac{1}{k!} Q_k \times \partial^k$ preservers positivity.
	However, by \cite[Prop.\ 2.1]{schmued87}, for $H_{1, 1} \in \Herm_{2, +}$ and $y \in \rset$, $(Q_k(H_{1, 1})(y))_{k \in \nset_0}$ is not an operator $\rset$-moment sequence, since 
\[
	\begin{pmatrix}Q_0(H_{1, 1}) & Q_1(H_{1, 1})\\Q_1(H_{1, 1}) & Q_2(H_{1, 1})\end{pmatrix} 
	= \begin{pmatrix}
		4 & 0 & 0 & 2
	\\	0 & 1 & 2 & 0
	\\	0 & 2 & 1 & 0
	\\	2 & 0 & 0 & 4
	\end{pmatrix}
\]
	is not positive semi-definite.
\exmsymbol
\end{exm}
	The proof of \Cref{thm:BorceaKClosedPolynomialOperators} cannot yield an operator moment sequence for $(Q_\alpha(A)(y))_{\alpha \in \nset_0^n}$. 
	The problem stems from the step (i) $\Rightarrow$ (ii) where the Haviland theorem for operator polynomials (\Cref{thm:HavilandTheoremOperators}) is used. 
	In order to construct a measure $\nu_y: \Bor(K) \to \cL(B(\cH)_h, B(\cH)_h)$ with moments $(Q_\alpha(A)(y))_{\alpha \in \nset_0^n}$, the measures from the Haviland theorem must fulfill certain conditions. 
	These conditions are, for example, outlined in \cite[Thm.\ 2]{berb09} and ensure that one can construct a bounded sesquilinear form from which the operator $\nu_y(A)$ follows. 

	However, on compact sets $K \subseteq \rset^n$, the moment problem is determinate and every local operator moment sequence is a general operator moment sequence.

\section{Operator $K$-Positivity Preserver for Compact $K$}
\label{sec:BorceaCharacterizationOnCompactSets}

\begin{thm}
\label{thm:BorceaKCompactPolynomialOperators}
Let $n \in \nset$, $K \subseteq \rset^n$ be compact, and $\cH$ be a Hilbert space.
Let
\[
	T: B(\cH)_h \otimes \rset[x_1, \dots, x_n] \to B(\cH)_h \otimes \rset[x_1, \dots , x_n], 
	\ 
	T = \sum_{\alpha \in \nset_0^n} \frac{1}{\alpha!} \cdot Q_\alpha \times \partial^\alpha
\]
with linear operators $Q_\alpha: B(\cH)_h \to B(\cH)_h \otimes \rset[x_1, \dots, x_n]$, $\alpha \in \nset_0^n$.
Then the following are equivalent:
\begin{enumerate}[(i)]
\item $T$ preserves positivity on $K$, i.e., $T\pos(K) \subseteq \pos(K)$.

\item For each $y \in K$, there exists a positive map-valued measure $\nu_y: {\Bor(K-y)} \to \cL(B(\cH)_h, B(\cH)_h)$ such that 
\[
	Q_\beta(A)(y) = \int_{K-y} t^\beta ~\diff\nu_y[A](t)
\]
for all $\beta \in \nset_0^n$.
\end{enumerate}
\end{thm}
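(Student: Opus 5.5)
The proof follows the scheme of \Cref{thm:BorceaKClosedPolynomialOperators}, with \Cref{thm:HavilandTheoremOperators} replaced by the compact Haviland-type result \Cref{thm:PositiveMapCompactSetExistsPositiveMeasure}. The point is that the latter yields an operator-valued (not merely scalar) representation, so no local-to-global passage is needed.

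For (i) $\Rightarrow$ (ii), fix $y \in K$ and define the evaluation map
\[
	L_y: B(\cH)_h \otimes \rset[x_1, \dots, x_n] \to B(\cH)_h, \quad p \mapsto Tp(y).
\]
It is linear, and by (i) we have $L_y p = Tp(y) \succeq 0$ for all $p \in \pos(K)$. Since $K$ is compact, \Cref{thm:PositiveMapCompactSetExistsPositiveMeasure} gives a positive map-valued measure $\mu_y: \Bor(K) \to \cL(B(\cH)_h, B(\cH)_h)$ with $Tp(y) = \int_K p \,\diff\mu_y$. We then set $\nu_y := \mu_y(\,\cdot\, + y)$ on $\Bor(K-y)$; the translate of a positive map-valued measure is again positive, since each $\nu_y[A]$ is just the pushforward of $\mu_y[A]$ under $t \mapsto t-y$. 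Finally, as in the earlier proof, we insert the formula of \Cref{rem:QalphaFormula}:
\[
	Q_\beta(A)(y) = \sum_{\alpha \preceq \beta}\binom{\beta}{\alpha}(-1)^{\beta-\alpha}y^{\beta-\alpha}\int_K t^\alpha\,\diff\mu_y[A](t) = \int_K (t-y)^\beta\,\diff\mu_y[A](t) = \int_{K-y} t^\beta\,\diff\nu_y[A](t).
\]
The middle equality uses linearity of the operator-valued integral and the binomial theorem. The last equality is the change of variables for operator-valued measures, which holds because it holds weakly, i.e.\ after pairing with $\skl{\,\cdot\, a}{a}_\cH$ for every $a \in \cH$.

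For (ii) $\Rightarrow$ (i), fix $y \in K$ and put $\mu_y := \nu_y(\,\cdot\, - y)$. By the canonical representation \eqref{eq:DiffSumReprT} and (ii),
\[
	T(A \otimes x^\beta)(y) = \sum_{\alpha\preceq\beta}\binom{\beta}{\alpha} y^{\beta-\alpha} Q_\alpha(A)(y) = \int_{K-y}(t+y)^\beta\,\diff\nu_y[A](t) = \int_K t^\beta\,\diff\mu_y[A](t).
\]
By linearity this gives $Tp(y) = \int_K p\,\diff\mu_y$ for all operator polynomials $p$. Positivity of map-valued integrals of positive operator polynomials then yields $Tp(y) \succeq 0$ for $p \in \pos(K)$. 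As $y \in K$ is arbitrary, (i) follows. Alternatively, one can reduce to (iii) of \Cref{thm:BorceaKClosedPolynomialOperators} by taking $\nu_{y,a} := \skl{\nu_y(\,\cdot\,)(\,\cdot\,)a}{a}_\cH$ and checking that it is a positive map-valued measure into $\cL(B(\cH)_h,\rset)$.

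The main obstacle is the identification of operator-valued integrals in (i) $\Rightarrow$ (ii): the translation step and the linearity used in the binomial resummation. Both are justified by testing against $a \in \cH$, where everything reduces to scalar finite measures $\skl{\mu_y[A](\cdot)a}{a}_\cH$. Note that for non-positive $A$ these are signed measures, but of finite total variation since $A$ is a difference of positive operators. Operators are then recovered by polarization, since self-adjoint operators are determined by their quadratic forms. Compactness of $K$ also guarantees that all monomials are bounded on $K$ and hence integrable, so the integrals are well defined.
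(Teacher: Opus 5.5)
Your proposal is correct and follows the paper's proof essentially step for step. For (i)$\Rightarrow$(ii), both apply the compact operator-valued Haviland theorem to $L_y(p)=Tp(y)$, then use the binomial resummation from the explicit $Q_\beta$ formula and translate by $\nu_y=\mu_y(\,\cdot\,+y)$. For (ii)$\Rightarrow$(i), both run the reverse computation with $\mu_y=\nu_y(\,\cdot\,-y)$ and conclude from positivity of map-valued integrals. Your weak-pairing remarks and the alternative reduction to (iii) of the closed-$K$ theorem are valid extras, not needed for the main argument.
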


The proof is an adapted version of the proof of \cite[Thm.\ 3.5]{didio25KPosPresGen} using \cite[Thm.\ 4]{cimpzalar13} on compact sets and a generalization of \cite[Thm.\ 5.4]{didiolang26_matrixpospres}.

\begin{proof}
(i) $\Rightarrow$ (ii): 
Let $y \in K$ and define
\[L_y: B(\cH)_h \otimes \rset[x_1, \dots, x_n] \to B(\cH)_h,\quad p\mapsto L_y(p) := Tp(y).\]
Then $L_y(p) \succeq 0$ for all $p \in \Pos(K)$.
\Cref{thm:PositiveMapCompactSetExistsPositiveMeasure} asserts the existence of a positive Borel map-valued measure $\mu_y: \Bor(K) \to \cL(B(\cH)_h, B(\cH)_h)$ such that
\[L_y(p) = \int_K p(t) ~\diff \mu_y(t)\]
for all $p \in B(\cH)_h \otimes \rset[x_1, \dots, x_n]$. 
	By \Cref{rem:QalphaFormula}, 
\[
	Q_\beta(A)(y) = \sum_{\alpha \preceq \beta} 
		\binom{\beta}{\alpha} \cdot (-1)^{\beta-\alpha} \cdot T(A \otimes t^\alpha)(y) \cdot y^{\beta-\alpha}
\]
	for all $\beta \in \nset_0^n$. 
	Hence, 
\begin{align*}
	Q_\beta(A)(y)
	&= \sum_{\alpha \preceq \beta} 
		\binom{\beta}{\alpha} \cdot (-1)^{\beta-\alpha} \cdot y^{\beta-\alpha} \cdot T(A \otimes t^\alpha)(y)
\\	&= \sum_{\alpha \preceq \beta} 
		\binom{\beta}{\alpha} \cdot (-1)^{\beta-\alpha} \cdot y^{\beta-\alpha} \cdot \int_K A \otimes t^\alpha ~\diff\mu_y(t)
\\	&= \int_K A \otimes (t-y)^\beta ~\diff\mu_y(t)
	= \int_K (t-y)^\beta ~\diff\mu_y[A](t)
	= \int_{K-y} t^\beta ~\diff\nu_y[A](t)
\end{align*}
	for $\beta \in \nset_0^n$, $A \in B(\cH)_h$ where $\nu_y := \mu_y(\,\cdot\, + y): \Bor(K-y) \to \cL(B(\cH)_h, B(\cH)_h)$ is a positive map-valued measure.

(ii) $\Rightarrow$ (i):
Let $y \in K$ and let $\nu_y: \Bor(K-y) \to \cL(B(\cH)_h, B(\cH)_h)$ be a positive map-valued measure such that
\[
	Q_\beta(A)(y) = \int_{K-y} t^\beta ~\diff\nu_y[A](t)
\]
for all $A \in B(\cH)_h$ and all $\beta\in\nset_0^n$.
Define $\mu_y := \nu_y(\,\cdot\, - y): \Bor(K) \to \cL(B(\cH)_h, B(\cH)_h)$ as the pushforward measure.
Then
\begin{align*}
\int_K A \otimes t^\beta ~\diff\mu_y(t)
	&= \int_{K-y} A \otimes (t+y)^\beta ~\diff\nu_y(t)
\\	&= \sum_{\alpha \preceq \beta} \binom{\beta}{\alpha}\cdot y^{\beta-\alpha}\cdot \int_{K-y} t^\alpha ~\diff\nu_y[A](t)
\\	&= \sum_{\alpha \in \nset_0^n} \frac{1}{\alpha!}\cdot Q_\alpha(A)(y)\cdot \partial^\alpha y^\beta
	= T(A \otimes x^\beta)(y)
\end{align*}
for all $\beta \in \nset_0^n$, $A \in B(\cH)_h$, and $y \in K$.
By linearity,
\[
	\int_K p(t) ~\diff\mu_y(t) = Tp(y)
\]
for all $p \in B(\cH)_h \otimes \rset[x_1, \dots, x_n]$ and $y \in K$.
Hence, by \Cref{thm:PositiveMapCompactSetExistsPositiveMeasure}, $T$ is a $K$-positivity preserver.
\end{proof}

The proof of (ii) $\Rightarrow$ (i) from \Cref{thm:BorceaKCompactPolynomialOperators} also states, that if one of the equivalent conditions from \Cref{thm:BorceaKCompactPolynomialOperators} is fulfilled, then there exist positive map-valued measures $\mu_y: \Bor(K) \to \cL(B(\cH)_h, B(\cH)_h)$ for $y \in K$ such that 
\[
	Tp(y) = \int_K p(t) ~\diff\mu_y(t)
\]
	for all $p \in B(\cH)_h \otimes \rset[x_1, \dots, x_n]$ and $y \in K$. 
	Additionally, (ii) states that for every $A \in B(\cH)_{h, +}$ and $y \in K$, the sequence $(Q_\alpha(A)(y))_{\alpha \in \nset_0^n}$ is an operator $(K-y)$-moment sequence.

\section*{Acknowledgments}
The author thanks Philipp di Dio for his advise on the paper.

\section*{Funding}

The author is supported by the Deutsche Forschungs\-gemein\-schaft DFG with the grant DI-2780/2-1 of Philipp di Dio.

\bibliographystyle{amsalpha}

\providecommand{\bysame}{\leavevmode\hbox to3em{\hrulefill}\thinspace}
\providecommand{\MR}{\relax\ifhmode\unskip\space\fi MR }
\providecommand{\MRhref}[2]{%
  \href{http://www.ams.org/mathscinet-getitem?mr=#1}{#2}
}
\providecommand{\href}[2]{#2}

\end{document}